\newtheorem{theorem}{Theorem}[section]
\newtheorem{lemma}[theorem]{Lemma}
\newtheorem{remark}[theorem]{Remark}
\newtheorem{folklore}[theorem]{Folklore}
\begin{document}
	
\title[Absolute compatibility]{Absolutely compatible pair of elements in a von Neumann algebra-II}
\author{Anil Kumar Karn}
	
\address{School of Mathematical Sciences, National Institute of Science Education and Research, HBNI, Bhubaneswar, P.O. - Jatni, District - Khurda, Odisha - 752050, India.}

\email{\textcolor[rgb]{0.00,0.00,0.84}{anilkarn@niser.ac.in}}
	
	
\subjclass[2010]{Primary 46L10; Secondary 46B40.}
	
\keywords{Absolutely ordered space, absolute oder unit space, order isometry, absolute value preserving maps, absolute matrix order unit space.}
	
\begin{abstract}
	Let $A$ be a unital C$^*$-algebra with unity $1_A$. A pair of elements $0 \le a, b \le 1_A$ in $A$ is said to be \emph{absolutely compatible} if, $\vert a - b \vert + \vert 1_A - a - b \vert = 1_A.$ In this paper we provide a complete description of absolutely compatible pair of strict elements in a von Neumann algebra. The end form of such a pair has a striking resemblance with that of a `generic pair' of projections on a complex Hilbert space introduced by Halmos. 
\end{abstract}
\maketitle 

\section{Introduction}
Let $A$ be a C$^*$-algebra. A pair of elements $a, b \in A$ is said to be orthogonal, if $$a b = 0 = b a = a^* b = a b^*.$$ Orthogonal pair of positive elements play an important role in the theory of C$^*$-algebras. For example, it follows from the functional calculus that every self-adjoint element $a \in A_{sa}$ has a unique decomposition: $a = a^+ - a^-$ in $A^+$, where $a^+$ is algebraically orthogonal to $a^-$. Recently, the author proved an order theoretic characterization of algebraic orthogonality among positive elements of a C$^*$-algebra \cite{K18}. (Also see \cite{K14, K16}.) 

Orthogonal pairs of positive elements of norm $\le 1$ exhibit an interesting property. Let $A$ be a unital C$^*$-algebra with unity $1_A$. For a pair of elements $0 \le a, b \le 1_A$ in $A$, we have $a b = 0$ ($a$ is algebraically orthogonal to $b$) if and only if $a + b \le 1_A$ and $\vert a - b \vert + \vert 1_A - a - b \vert = 1_A.$ We isolate the later part and propose the following definition: A pair of elements $0 \le a, b \le 1_A$ in $A$ is said to be \emph{absolutely compatible}, (\cite{K18}), if $$\vert a - b \vert + \vert 1_A - a - b \vert = 1_A.$$ 
It was proved in \cite{K18} that a projection $p$ in a C$^*$-algebra $A$ is absolutely compatible with a positive element $a$ of $A$ with $\Vert a \Vert \le 1$ if, and only if, $a p = p a$. However, the two notions are distinct in general. The notion of absolute compatibility was introduced as an instrument to prove a spectral decomposition theorem in the context of ``absolute order unit spaces'' \cite{K18}. As an absolute order unit space is an order theoretic generalization of unital C$^*$-algebras \cite{K18}, absolute compatibility appears to be a significant property. 

Keeping this point of view, the author, along with Jana and Peralta, initiated a study of absolute compatibility in operator algebras \cite{JKP, JKP1}. Let $M$ be a von Neumann algebra and let $0 \le a \le 1_M$. We write 
$$s(a) := \sup \lbrace p \in \mathcal{P}(M): p \le a \rbrace$$ 
and
$$n(a) := \sup \lbrace p \in \mathcal{P}(M): p a = 0 \rbrace.$$
For $0 \le a \le 1_M$, we say that $a$ is \emph{strict} in $M$, if $s(a) = 0$ and $n(a) = 0$. In \cite{JKP}, it was proved that an absolutely compatible pair of positive elements in a von Neumann algebra has a (matricial) decomposition as a direct sum of commuting and `strict' elements. Let $0 \le a, b \le 1_M$ such that $a$ is absolutely compatible with $b$. Then there exist mutually orthogonal projections $p_1, p_2, s, n_1, n_2 \in \mathcal{P}(M)$ with $p_1 + p_2 + s + n_1 + n_2 = 1_M$ such that  
$$a = p_1 \oplus a_1 \oplus a_2 \oplus 0 \oplus a_3$$
and 
$$b = b_1 \oplus p_2 \oplus b_2 \oplus b_3 \oplus 0$$
with respect to $\{ p_1, p_2, s, n_1, n_2 \}$ with $a_2$ and $b_2$ are strict and absolutely compatible in $sMs$ \cite[Theorem 2.10]{JKP}. Thus the study of absolutely compatible pair of elements reduces to such strict pairs. It was further proved that 
\begin{theorem}\label{characterization in vN}\cite[Theorem 2.12]{JKP} Let $M$ be a von Neumann algebra, and let $a, b $ be elements in $[0, 1]_{M}$. Then
	$a$ is absolutely compatible with $b$ if and only if there exists a projection $p_1$ in $M$ so that $a$ and $b$ have matrix representations,
	say $a = \left( \begin{array}{cc} a_{11} & a_{12} \\  a_{12}^{\ast} &
	a_{22} \end{array} \right),$ and $b = \left( \begin{array}{cc} b_{11} &
	b_{12} \\  b_{12}^{\ast} & b_{22} \end{array} \right)$ with respect to the set
	$\{ p_1, 1-p_1 = p_2\}$ {\rm(}i.e., $a_{ij} = p_i a p_j$ and $b_{ij} = p_i b p_j${\rm)} satisfying:
	\begin{enumerate}
		\item[$(i)$] $a_{12} + b_{12} = 0$;
		\item[$(ii)$] $a_{12} a_{12}^{\ast}
		= (p_1 - a_{11}) (p_1 - b_{11})$;
		\item[$(iii)$] $a_{12}^{\ast} a_{12} = a_{22} b_{22}= b_{22} a_{22}$;
		\item[$(iv)$] $a_{12} = a_{11} a_{12} + a_{12} a_{22} = b_{11} a_{12} +
		a_{12} b_{22}$.
	\end{enumerate}
\end{theorem}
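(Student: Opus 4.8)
The plan is to establish the two implications separately. The forward direction is the substantial one: I would first pass to the case of a strict pair by invoking the structure theorem, and then settle that case by a direct computation built on the polar decompositions of $a-b$ and $1-a-b$.

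For the implication ``$\Leftarrow$'', suppose $a,b\in[0,1]_M$ have the displayed matrix forms relative to $\{p_1,p_2\}$ with (i)--(iv). Condition (i) says the $(1,2)$-blocks of $a+b$ and of $1-a-b$ vanish, so both are block-diagonal; (ii) forces $a_{11}b_{11}=b_{11}a_{11}$, (iii) forces $a_{22}b_{22}=b_{22}a_{22}$, and a short computation using (iv) shows that the off-diagonal blocks of $(a-b)^2$ vanish. Expanding the diagonal blocks and using these commutations yields $(a-b)^2=(2p_1-a_{11}-b_{11})^2\oplus(a_{22}+b_{22})^2$. From $a^2\le a$, together with (ii) and (iii) (cancel a square root of $p_1-a_{11}$, resp.\ of $a_{22}$, on its support, using the commutativity just obtained), one gets $p_1\le a_{11}+b_{11}\le 2p_1$ and $0\le a_{22}+b_{22}\le p_2$; hence $\vert a-b\vert=(2p_1-a_{11}-b_{11})\oplus(a_{22}+b_{22})$ and $\vert 1-a-b\vert=(a_{11}+b_{11}-p_1)\oplus(p_2-a_{22}-b_{22})$, and these add to $p_1\oplus p_2=1_M$. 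The only delicate point here is deriving the two order bounds from positivity.

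For the implication ``$\Rightarrow$'', I would begin by applying \cite[Theorem 2.10]{JKP}. On each of its four non-strict blocks one of $a,b$ is a projection or $0$, so there $a$ and $b$ commute, and one checks directly that putting the two blocks on which $a$ (resp.\ $b$) acts as the identity into the ``$p_1$'' of the present theorem, and the two blocks on which $a$ (resp.\ $b$) vanishes into its ``$p_2$'', makes (i)--(iv) hold on those blocks. This reduces the problem to a strict pair $a,b$. Here the crucial observations are that $\vert a-b\vert$ and $\vert1-a-b\vert=1_M-\vert a-b\vert$ commute, and that $\ker(a-b)=\ker(1-a-b)=0$: indeed $\ker(1-a-b)=\chi_{\{1\}}(\vert a-b\vert)$ is a spectral projection of $a-b$, hence commutes with $a-b$ and with $a+b$, hence with $a$ and $b$, and on its range $a$ would restrict to a projection, contradicting strictness; symmetrically for $\ker(a-b)$. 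Consequently $a-b=\vert a-b\vert\,v$ and $1-a-b=\vert1-a-b\vert\,w$ with self-adjoint unitaries $v=\operatorname{sign}(a-b)$, $w=\operatorname{sign}(1-a-b)$, and $u:=\vert a-b\vert$ commutes with $v$ and with $w$, hence with $a$ and $b$. I then set $p_1:=\tfrac12(1_M-w)$, which is a spectral projection of $a+b$ (so $w=1_M-2p_1$); condition (i) is immediate, and $a=\tfrac12\bigl(1_M+uv-(1_M-u)w\bigr)$, $b=\tfrac12\bigl(1_M-uv-(1_M-u)w\bigr)$.

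With this choice of $p_1$ the remaining conditions are a bookkeeping computation. Using $p_1w=wp_1=-p_1$ and $p_2w=wp_2=p_2$, the blocks come out as $a_{12}=\tfrac{u}{2}p_1vp_2$ (so $b_{12}=-a_{12}$), $p_1-a_{11}=\tfrac{u}{2}p_1(1_M-v)p_1$, $p_1-b_{11}=\tfrac{u}{2}p_1(1_M+v)p_1$, $a_{22}=\tfrac{u}{2}p_2(1_M+v)p_2$ and $b_{22}=\tfrac{u}{2}p_2(1_M-v)p_2$; then $p_2=1_M-p_1$ and $v^2=1_M$ reduce (ii) and (iii) to the identities $p_1vp_2vp_1=p_1-p_1vp_1vp_1$ and $p_2vp_1vp_2=p_2-p_2vp_2vp_2$ (plus $a_{22}b_{22}=b_{22}a_{22}$, which is equally immediate), while (iv) reduces to $p_1a(1-a)p_2=0=p_1b(1-b)p_2$, which follows from $a(1-a)=\tfrac14u(1_M-u)(2\cdot1_M+vw+wv)$, $b(1-b)=\tfrac14u(1_M-u)(2\cdot1_M-vw-wv)$, and $p_1(vw+wv)p_2=p_1v(wp_2)+(p_1w)vp_2=0$. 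I expect the main obstacle to be the strict case — in particular, recognizing that the right projection is $p_1=\tfrac12\bigl(1_M-\operatorname{sign}(1-a-b)\bigr)$ and establishing the vanishing of the two kernels together with the commutativity of $u$ with $v$ and $w$; the reduction via \cite[Theorem 2.10]{JKP} and the positivity estimates in ``$\Leftarrow$'' are routine but need a little care.
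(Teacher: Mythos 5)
Your proposal is essentially correct, but note that this statement has no proof in the present paper at all: it is imported verbatim from \cite[Theorem 2.12]{JKP}, and the author only uses it (with the specific choice $p_1=r(a\circ b)$) in the proofs of Theorems \ref{6} and \ref{7}. So the honest comparison is not with an in-paper argument but with the paper's general toolkit, and there your route fits remarkably well: your strict-case core --- the sign symmetries $v=\operatorname{sign}(a-b)$, $w=\operatorname{sign}(1-a-b)$, the key fact that $u=\vert a-b\vert=1-\vert 1-a-b\vert$ lies simultaneously in the abelian algebras generated by $a-b$ and by $a+b$ (hence commutes with $a$ and $b$), and the explicit projection $p_1=\tfrac12(1-w)=\chi_{(1,\infty)}(a+b)$ --- is exactly the polar-decomposition/Halmos two-symmetries circle of ideas that this paper advertises for Theorem \ref{6}, and in the strict case your $p_1$ in fact coincides with the paper's $r(a\circ b)$. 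What your approach buys is a self-contained proof of the characterization (modulo the reduction via \cite[Theorem 2.10]{JKP}, which is legitimate and non-circular) with an explicit spectral-projection choice of $p_1$, rather than an appeal to the external reference; the cost is that it only certifies existence of some $p_1$, whereas the paper's later arguments want the canonical one, which your strict-case identification supplies anyway.

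Three points in your sketch should be written out, though each works as you intend: (1) in the kernel argument, ``$a$ restricts to a projection, contradicting strictness'' needs both halves of strictness --- if $ae$ is a nonzero projection it violates $s(a)=0$ (since $e$ commutes with $a$, $ae\le a$), while if $ae=0$ it violates $n(a)=0$; (2) the order bounds in the converse: compressing $a^2\le a$ by $p_1$ and using (ii) gives $(p_1-a_{11})(p_1-a_{11}-b_{11})\le 0$ in the commutative C$^*$-algebra generated by $a_{11},b_{11}$, and on the set where $p_1-a_{11}$ vanishes one must invoke $b_{11}\ge 0$ to conclude $p_1\le a_{11}+b_{11}$ everywhere (similarly for $a_{22}+b_{22}\le p_2$); (3) the passage from blockwise validity of (i)--(iv) on the five reducing corners of \cite[Theorem 2.10]{JKP} to global validity should be stated explicitly (it holds because all cross-block compressions vanish). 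With these details supplied, the argument is complete and correct.
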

 Using these decompositions, a complete description of absolutely compatible pair of strict elements was given for the finite dimensional algebras $\mathbb{M}_n$ in the same paper \cite[Theorem 3.9]{JKP}. 

 In the present paper we continue the investigation and provide a complete description of absolutely compatible pair of strict elements in a von Neumann algebra. However, as expected, the finite dimensional (matricial) techniques used in \cite{JKP} fail to work. At this juncture, the author would like to thank Professor Kalyan B. Sinha for his suggestion to use polar decomposition. This idea works just perfectly. The main results of the paper are Theorems \ref{6} and \ref{7}.
 \begin{theorem}\label{6}
 	Let $M$ be a von Neumann algebra with the underlying Hilbert space $H$. Assume that $a, b \in [0, 1]_M$ be strict and absolutely compatible pair. Put $p = 1 - r(a \circ b)$. 
 	\begin{enumerate}
 		\item Then $p H$ is isometrically isomorphic to $(1 - p) H$. In particular, $H \equiv K \oplus K$, where $K = p H$. 
 		\item There exist strict elements $a_1, b_1 \in [0, p] \cap M$ with $a_1 b_1 = b_1 a_1$, $a_1 + b_1 \le p$ together with $p - (a_1 + b_1)$ strict in $p M p$; and a unitary $U: H \to K \oplus K$ such that 
 		$$a = U^* \begin{bmatrix} a_1 & (a_1 b_1)^{\frac{1}{2}} \\ (a_1 b_1)^{\frac{1}{2}} & p - a_1 \end{bmatrix} U \ \textrm{and} \ b = U^* \begin{bmatrix} b_1 & - (a_1 b_1)^{\frac{1}{2}} \\ - (a_1 b_1)^{\frac{1}{2}} & p - b_1 \end{bmatrix} U.$$
 	\end{enumerate}
 \end{theorem}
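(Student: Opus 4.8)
The plan is to funnel everything through the matricial description of Theorem~\ref{characterization in vN} and the polar decomposition of the corner $a_{12}$, feeding strictness in at two crucial points. Applying Theorem~\ref{characterization in vN} to the given pair gives a projection $p_1$; writing $p_2 = 1-p_1$, we obtain matrix forms $a=(a_{ij})$, $b=(b_{ij})$ with $b_{12}=-a_{12}$ obeying (i)--(iv). Since $a_{12}a_{12}^{\ast}$ and $a_{12}^{\ast}a_{12}$ are positive, (ii) forces $a_{11}$ and $b_{11}$ (equivalently $p_1-a_{11}$ and $p_1-b_{11}$) to commute and (iii) forces $a_{22}$ and $b_{22}$ to commute. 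Set $h := (a_{12}^{\ast}a_{12})^{1/2} = (a_{22}b_{22})^{1/2}\in p_2Mp_2$, which then commutes with $a_{22}$ and with $b_{22}$, and let $a_{12}=vh$ be the polar decomposition in $M$, so $v^{\ast}v = r(a_{12}^{\ast}a_{12})$ and $vv^{\ast}=r(a_{12}a_{12}^{\ast})$.

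The first use of strictness identifies $v$ and the corners $a_{11},b_{11}$. Strictness of $a$ (resp. $b$) is precisely the injectivity of $a$ and $1-a$ (resp. $b$ and $1-b$). From this I would deduce that $a_{22}$, $b_{22}$, $p_1-a_{11}$ and $p_1-b_{11}$ are injective: e.g. $a_{22}\eta=0$ gives, by (iii), $a_{12}^{\ast}a_{12}\eta = a_{22}b_{22}\eta = 0$, hence $a_{12}\eta=0$ and $\binom{0}{\eta}\in\ker a=\{0\}$; and $(p_1-a_{11})\xi=0$ gives, by (ii) and the commutativity just noted, $a_{12}a_{12}^{\ast}\xi=0$, hence $a_{12}^{\ast}\xi=0$ and $\binom{\xi}{0}\in\ker(1-a)=\{0\}$. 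Because $a_{22},b_{22}$ commute, $a_{22}b_{22}$ is injective, so $v^{\ast}v=r(a_{22}b_{22})=p_2$; because $a_{11},b_{11}$ commute and $p_1-a_{11}$ is injective, $vv^{\ast}=r\big((p_1-a_{11})(p_1-b_{11})\big)=r(p_1-b_{11})=p_1$. Thus $v$ is a unitary of $p_2H$ onto $p_1H$. Finally, substituting $a_{12}=vh$ into the two relations in (iv), commuting $h$ past $a_{22}$ and $b_{22}$ and cancelling the operator $h$ (which has dense range in $p_2H$), I get $v^{\ast}a_{11}v=p_2-a_{22}$ and $v^{\ast}b_{11}v=p_2-b_{22}$, that is, $a_{11}=v(p_2-a_{22})v^{\ast}$ and $b_{11}=v(p_2-b_{22})v^{\ast}$.

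Now the Jordan product can be computed blockwise: (iv) annihilates its off-diagonal corner, (iii) annihilates its $(2,2)$ corner, and the formulas just found (with $a_{12}a_{12}^{\ast}=va_{22}b_{22}v^{\ast}$) give its $(1,1)$ corner as $v(p_2-a_{22}-b_{22})v^{\ast}$; hence $a\circ b = v(p_2-a_{22}-b_{22})v^{\ast}\oplus 0$ relative to $\{p_1,p_2\}$. The step I expect to be the main obstacle is the injectivity of $p_2-a_{22}-b_{22}$ on $p_2H$ --- the second, decisive, application of strictness. I would argue by contradiction: $L:=\ker(p_2-a_{22}-b_{22})$ is invariant under $a_{22},b_{22},h$, on $L$ one has $b_{22}=p_2-a_{22}$ and $h=a_{22}^{1/2}(p_2-a_{22})^{1/2}$, and then a direct check shows that for every $\chi\in L$ the vector $\binom{v a_{22}^{1/2}\chi}{-(p_2-a_{22})^{1/2}\chi}$ lies in $\ker a$ and is nonzero whenever $\chi\neq 0$; since $\ker a=\{0\}$, this forces $L=\{0\}$. (Equivalently, $\left[\begin{smallmatrix}a_{22}&h\\h&p_2-a_{22}\end{smallmatrix}\right]$ restricted to $L\oplus L$ is the range projection of an isometry $L\to L\oplus L$, hence has nontrivial kernel unless $L=\{0\}$.) Also, applying the factorisation criterion for positive $2\times2$ operator matrices to $1-a\ge0$ and using $p_1-a_{11}=va_{22}v^{\ast}$ gives $b_{22}\le p_2-a_{22}$, so $p_2-a_{22}-b_{22}\ge0$ and $a\circ b\ge0$. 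Consequently $r(a\circ b)=vp_2v^{\ast}=p_1$, hence $p=1-r(a\circ b)=p_2$ and $1-p=p_1$; in particular $v\colon pH\to(1-p)H$ is a unitary, which is part~(1), and $H=(1-p)H\oplus pH\cong pH\oplus pH=K\oplus K$ with $K=pH$.

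For part~(2), put $a_1:=a_{22}$ and $b_1:=b_{22}$ in $pMp=p_2Mp_2$ and let $U\colon H=p_1H\oplus p_2H\to p_2H\oplus p_2H$ be the unitary $\binom{\xi}{\eta}\mapsto\binom{\eta}{v^{\ast}\xi}$. A direct block computation --- using $v^{\ast}v=p_2$, $vv^{\ast}=p_1$, $a_{12}=vh$, $b_{12}=-a_{12}$, $a_{11}=v(p_2-a_{22})v^{\ast}$, $b_{11}=v(p_2-b_{22})v^{\ast}$ and $h=(a_{22}b_{22})^{1/2}=(a_1b_1)^{1/2}$ --- gives
$$U a U^{\ast} = \begin{bmatrix} a_1 & (a_1 b_1)^{1/2} \\ (a_1 b_1)^{1/2} & p - a_1 \end{bmatrix}, \qquad U b U^{\ast} = \begin{bmatrix} b_1 & -(a_1 b_1)^{1/2} \\ -(a_1 b_1)^{1/2} & p - b_1 \end{bmatrix},$$
the required forms. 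The side conditions then follow from what was already proved: $a_1b_1=a_{22}b_{22}=b_{22}a_{22}=b_1a_1$ by (iii); $a_1+b_1=a_{22}+b_{22}\le p$ by the inequality above; and the injectivity of $a_{22}$, $b_{22}$ and $p_2-a_{22}-b_{22}$ (hence, by domination, of $p_2-a_{22}$, $p_2-b_{22}$ and $a_{22}+b_{22}$) says precisely that $n(\cdot)$ and $s(\cdot)$ vanish for $a_1$, $b_1$ and $p-(a_1+b_1)$ in $pMp$, i.e. these three elements are strict.
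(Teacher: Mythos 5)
Your argument is correct, and its core is the same as the paper's: invoke Theorem \ref{characterization in vN}, take the polar decomposition of the corner $a_{12}$, and use strictness twice --- first to see that the partial isometry has initial projection $p_2$ and final projection $p_1$ (because $a_{22}b_{22}$ and $(p_1-a_{11})(p_1-b_{11})$ are injective), and then to cancel $\vert a_{12}\vert$ in condition (iv) and obtain $a_{11}=v(p_2-a_{22})v^{*}$, $b_{11}=v(p_2-b_{22})v^{*}$, which gives the block forms via the same unitary $U$ the paper uses. Where you genuinely diverge is the endgame. The paper takes the projection of Theorem \ref{characterization in vN} to be $r(a\circ b)$ from the outset (this particular choice comes from the construction in \cite{JKP}) and then gets $n(p-a_1-b_1)=0$ by transporting $r(a_{11}+b_{11}-p_1)=r(a\circ b)=p_1$ through $u$, while $s(p-a_1-b_1)=0$ follows by domination. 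You instead keep $p_1$ generic, compute $a\circ b=v(p_2-a_{22}-b_{22})v^{*}\oplus 0$ blockwise from (ii)--(iv), and prove injectivity of $p_2-a_{22}-b_{22}$ directly by exhibiting, for $\chi$ in its kernel, the explicit vector $\bigl(va_{22}^{1/2}\chi,\,-(p_2-a_{22})^{1/2}\chi\bigr)$ in $\ker a$; this recovers $p=p_2$ and $r(a\circ b)=p_1$ a posteriori. Your route is thus self-contained relative to the bare existence statement of Theorem \ref{characterization in vN} and it makes the inequality $a_1+b_1\le p$ explicit (in the paper it is implicit, resting on $a\circ b\ge 0$ from \cite{JKP}), at the price of the extra kernel computation, which checks out. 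One step you should expand: asserting that the factorisation criterion applied to $1-a\ge 0$ ``gives $b_{22}\le p_2-a_{22}$'' is too quick, since the criterion only yields $h=a_{22}^{1/2}K(p_2-a_{22})^{1/2}$ with $\Vert K\Vert\le 1$, hence bounds like $hh^{*}\le a_{22}$, not the claimed inequality. A clean repair: conjugate $1-a$ by $\left[\begin{smallmatrix} v^{*} & 0\\ 0 & p_2\end{smallmatrix}\right]$ to get the positive matrix $\left[\begin{smallmatrix} a_{22} & -h\\ -h & p_2-a_{22}\end{smallmatrix}\right]$ with commuting entries, so positivity is the determinant condition $h^{2}=a_{22}b_{22}\le a_{22}(p_2-a_{22})$ in the abelian algebra generated by $a_{22},b_{22}$, and then cancel the injective element $a_{22}$ (e.g.\ from $a_{22}(x^{-})^{2}=0$ for $x=p_2-a_{22}-b_{22}$, Lemma \ref{2} in $p_2Mp_2$ gives $x^{-}=0$). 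This is a short, local fix, not a structural gap.
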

\begin{theorem}\label{7}
	Let $M$ be a von Neumann algebra with the underlying Hilbert space $H$. Assume that $a, b \in [0, 1]_M$ be strict and commuting pair such that $a^2 + b^2 \le 1$ with $1 - (a^2 + b^2)$ strict. Put $a_1 = \begin{bmatrix} a^2 & a b \\ a b & 1 - a^2 \end{bmatrix}$ and $b_1 = \begin{bmatrix} b^2 & - a b \\ - a b & 1 - b^2 \end{bmatrix}$. Then $a_1, b_1 \in [0, 1]_{M_2(M)}$ and $a_1$ is absolutely compatible with $b_1$.
\end{theorem}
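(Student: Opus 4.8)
The plan is to verify both statements by a direct computation carried out inside the \emph{abelian} von Neumann subalgebra $N$ of $M$ generated by $a$ and $b$; it is abelian because $a$ and $b$ are commuting self-adjoint elements, and every element appearing below lies in $N$ or in $M_2(N)$, so all the operators in sight commute with one another.

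I would first record the positivity statement $a_1,b_1\in[0,1]_{M_2(M)}$. Since $1-a^2-b^2\ge 0$ lies in $N$, its square root $c:=(1-a^2-b^2)^{1/2}$ is a well-defined element of $N$, and with
$$X=\begin{bmatrix} a & b\\ 0 & c\end{bmatrix},\qquad Y=\begin{bmatrix} b & -a\\ c & 0\end{bmatrix}\in M_2(N)$$
one checks directly (using $b^2+c^2=1-a^2$ and $ab=ba$) that $X^{\ast}X=a_1$ and $Y^{\ast}Y=1-a_1$; hence $0\le a_1\le 1$, and the analogous factorizations with $a$ and $b$ interchanged give $0\le b_1\le 1$. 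Alternatively, representing $N\cong L^\infty(\Omega,\mu)$, positivity may be checked $\mu$-almost-everywhere pointwise, where both bounds reduce to the elementary inequality $s^2(1-s^2-u^2)\ge 0$ valid for scalars with $s^2+u^2\le 1$.

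Next I would compute the two absolute values occurring in the definition of absolute compatibility. A subtraction gives
$$a_1-b_1=\begin{bmatrix} a^2-b^2 & 2ab\\ 2ab & -(a^2-b^2)\end{bmatrix};$$
squaring this, and using that $a^2-b^2$ commutes with $ab$, the off-diagonal entries of $(a_1-b_1)^2$ cancel while both diagonal entries equal $(a^2-b^2)^2+4a^2b^2=(a^2+b^2)^2$, so $(a_1-b_1)^2$ equals $(a^2+b^2)^2$ times the unit of $M_2(M)$; since $a^2+b^2\ge 0$, taking the positive square root shows that $|a_1-b_1|$ equals $(a^2+b^2)$ times the unit. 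Likewise, a termwise subtraction gives
$$1-a_1-b_1=\begin{bmatrix} 1-a^2-b^2 & 0\\ 0 & -(1-a^2-b^2)\end{bmatrix},$$
whose square is $(1-a^2-b^2)^2$ times the unit; since $1-a^2-b^2\ge 0$, its positive square root $|1-a_1-b_1|$ equals $(1-a^2-b^2)$ times the unit. Adding the last two identities yields $|a_1-b_1|+|1-a_1-b_1|=1$ in $M_2(M)$, which together with $a_1,b_1\in[0,1]_{M_2(M)}$ is exactly the assertion that $a_1$ is absolutely compatible with $b_1$.

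None of these steps is hard; the one place needing attention is the computation of $(a_1-b_1)^2$, where the hypothesis that $a$ and $b$ commute is precisely what forces $(a_1-b_1)^2$, and likewise $1-a_1-b_1$, to be diagonal with each entry a single positive element of $N$, so that the stated positive square roots are immediate (this uses only $a^2+b^2\ge 0$ and $1-a^2-b^2\ge 0$); the write-up should be arranged so this use of commutativity is explicit. As a cross-check, one may instead deduce the result from Theorem \ref{characterization in vN} applied with the projection $p_1=\begin{bmatrix} 0 & 0\\ 0 & 1\end{bmatrix}\in M_2(M)$: written in block form relative to $\{p_1,1-p_1\}$, the pair $a_1,b_1$ satisfies conditions (i)--(iv) there, each of which reduces, after obvious cancellations in $N$, to a true identity. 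I expect the direct route above to be the cleaner one to present.
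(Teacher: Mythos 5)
Your proof is correct for the statement as given, and its core — computing $a_1-b_1$ and $1-a_1-b_1$ explicitly, noting that $(a_1-b_1)^2$ and $(1-a_1-b_1)^2$ are diagonal with both entries $(a^2+b^2)^2$ resp.\ $(1-a^2-b^2)^2$ (commutativity of $a$ and $b$ killing the off-diagonal terms), and adding the two absolute values — is exactly the paper's argument. The one genuine difference is the positivity step: the paper proves $a_1\ge 0$ and $1-a_1\ge 0$ by a Cauchy--Schwarz estimate on the quadratic form $\langle a_1(\xi\oplus\eta),\xi\oplus\eta\rangle$, whereas you exhibit $a_1=X^{\ast}X$ and $1-a_1=Y^{\ast}Y$ with $c=(1-a^2-b^2)^{1/2}$; this factorization is a clean, purely algebraic alternative, and your cross-check via Theorem \ref{characterization in vN} with $p_1=\mathrm{diag}(0,1)$ also goes through (note that with $p_1=\mathrm{diag}(1,0)$ condition (ii) would fail, so your choice of corner matters). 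One remark: the paper's proof additionally shows that $a_1$ and $b_1$ are \emph{strict} in $M_2(M)$ — this is the only place where the strictness hypotheses on $a$, $b$ and $1-(a^2+b^2)$, together with Lemma \ref{2}, are used, and it is what makes Theorem \ref{7} a true converse to Theorem \ref{6}. Since the stated conclusion does not claim strictness, your not invoking those hypotheses is not a gap for the literal statement; but if you want the construction to yield a strict absolutely compatible pair, you should add that short verification (as in the paper, by showing any projection $p\le a_1$ or with $a_1p=0$ forces the components of vectors in its range to vanish via Lemma \ref{2}).
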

 It is interesting and surprising to note that the end form of an absolutely compatible pair of strict elements bears a striking resemblance with that of a `generic pair' of projections on a complex Hilbert space (studied by Halmos \cite{H69}). 
 
 A pair of closed subspaces $M$ and $N$ of a Hilbert space $H$ is said to be in {\emph generic position}, if each of the following four subspaces of $H$: $M \cap N$, $M \cap N^{\perp}$, $M^{\perp} \cap N$ and $M^{\perp} \cap N^{\perp}$ are trivial. Let $P$ and $Q$ be the projections of $H$ on $M$ and $N$ respectively. In \cite{H69}, Halmos proved that if $M$ and $N$ are in generic position, then there exists a Hilbert space $K$, commuting, positive and invertible contractions $C$ and $S$ in $B(K)$ and a unitary operator $U: H \to K \oplus K$ such that 
 $$P = U^* \begin{bmatrix} C^2 & C S \\ C S & S^2 \end{bmatrix} U \ \textrm{and} \ Q = U^*  \begin{bmatrix} C^2 & - C S \\ - C S & S^2 \end{bmatrix} U.$$ 
 
 This observation further signifies the importance of a pair of strict absolutely compatible elements in an operator algebra. This paper is in the sequel of \cite{JKP}.

\section{The main results}
We obtain some basic results to prove Theorems \ref{6} and \ref{7}.
\begin{lemma}\label{1}
	Let $M$ be a von Neumann algebra with the underlying Hilbert space $H$ and let $a \in [0, 1]_M$ be strict. If $p \in \mathcal{P}(M)$, then, for the matrix representation $a = \begin{bmatrix} a_{11} & a_{12} \\ a_{12}^* & a_{22} \end{bmatrix}$ with respect to $\lbrace p, 1 - p \rbrace$, we have $a_{11} \in [0, p]$ and $a_{22} \in [0, 1 - p]$ are also strict in the von Neumann algebras $pMp$ and $(1 - p)M(1 - p)$ respectively.
\end{lemma}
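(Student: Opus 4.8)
The plan is to recall that strictness of $a \in [0,1]_M$ means two things: $s(a) = 0$, i.e. $a$ dominates no nonzero projection (equivalently, no nonzero projection $q$ satisfies $qa = q$), and $n(a) = 0$, i.e. no nonzero projection $q$ satisfies $qa = 0$ (equivalently $a$ has trivial kernel as an operator, since $M$ acts on $H$ and $n(a)$ is the projection onto $\ker a$). By symmetry (replace $a$ by $p - a_{11}$ to pass between the $s$ and $n$ conditions, and replace $p$ by $1-p$ to pass between the two corners), it suffices to prove one of the four statements, say that $a_{11}$ is strict in $pMp$. Since $0 \le a \le 1$ and $a_{11} = pap$, compressing gives $0 \le a_{11} \le p$, so $a_{11} \in [0,p]_{pMp}$ and the only content is $s(a_{11}) = 0 = n(a_{11})$ computed inside $pMp$.

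First I would show $n(a_{11}) = 0$ in $pMp$. Suppose $q \in \mathcal{P}(pMp)$ with $q \le p$ and $q a_{11} q = 0$. Then $q a q$ has $(1,1)$-corner $q(pap)q = q a_{11} q = 0$ with respect to $\{p, 1-p\}$; but $q a q \ge 0$, and a positive element of $M$ whose compression to $qMq$ is the zero of a corner forces the whole compression $q a q = 0$ only after noting $q(pap)q = qaq$ since $q \le p$. Hence $qaq = 0$, so $a^{1/2} q = 0$, so $aq = 0$, i.e. $q \le n(a) = 0$, giving $q = 0$. Thus $n(a_{11}) = 0$.

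Next, for $s(a_{11}) = 0$: apply the just-proved fact to $p - a_{11}$, which is the $(1,1)$-corner of $1 - a$ with respect to $\{p, 1-p\}$. Here I must first check that $1 - a$ is itself strict in $M$, i.e. $s(1-a) = n(1-a) = 0$; but $s(1-a) = n(a) = 0$ and $n(1-a) = s(a) = 0$ directly from the definitions (a projection $q$ satisfies $q(1-a) = q$ iff $qa = 0$, and $q(1-a) = 0$ iff $qa = q$, and the latter gives $q \le a$ since $0 \le a \le 1$). So by the $n$-case applied to $1-a$ and the corner $(1-a)_{11} = p - a_{11}$, we get $n(p - a_{11}) = 0$, which is exactly $s(a_{11}) = 0$. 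This proves $a_{11}$ is strict in $pMp$, and the statement for $a_{22}$ follows by the symmetric argument with $1-p$ in place of $p$.

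The step I expect to need the most care is the passage from "a corner of a positive operator vanishes" to "the compression vanishes" — i.e. justifying $q a q = 0$ from $q a_{11} q = 0$ when $q \le p$ — together with the bookkeeping that $n(\cdot)$ really is the kernel projection so that $qaq = 0 \Rightarrow aq = 0$; everything else is a symmetry reduction. I would make this precise by noting $q \le p$ implies $q a q = q(pap)q = q a_{11} q$, so the hypothesis is literally $qaq = 0$, whence $\|a^{1/2}q\|^2 = \|q a q\| = 0$.
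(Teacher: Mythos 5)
Your proof is correct, and while it follows the same overall strategy as the paper --- establish the $n$-part first and then deduce the $s$-part by applying it to $1-a$, whose strictness follows from $s(1-a)=n(a)=0$ and $n(1-a)=s(a)=0$ --- the execution of the key step is genuinely different and simpler. The paper works with the full $2\times 2$ representation: setting $p_0 = p - r(a_{11})$, it compresses $a$ by $\mathrm{diag}(p_0,\,1-p)$, uses positivity of the resulting operator matrix to conclude $p_0 a_{12}=0$, deduces $(1-p_0)a(1-p_0)=a$, and finally invokes $r(a)=1$ (i.e.\ $n(a)=0$) to force $p_0=0$. You bypass the off-diagonal bookkeeping entirely: for a projection $q\le p$ with $q a_{11}=0$ you observe $qaq = q(pap)q = q a_{11} q = 0$, hence $a^{1/2}q=0$ by the $C^*$-identity $\Vert qaq\Vert = \Vert a^{1/2}q\Vert^2$, hence $aq=0$ and $q\le n(a)=0$. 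This yields a shorter argument that never touches $a_{12}$; the paper's detour produces the auxiliary fact $p_0 a_{12}=0$, but since that is not used elsewhere, nothing is lost by your route. The point you flagged as delicate --- passing from the vanishing corner to $qaq=0$ and then to $aq=0$ --- is exactly where the content lies, and you justify it correctly via $q\le p$ and positivity of $a$; the remaining reductions (strictness of $1-a$, the equivalence of $q\le a_{11}$ with $q(p-a_{11})=0$ for $0\le a_{11}\le p$, and the symmetry in $p \leftrightarrow 1-p$) are all sound.
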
 
\begin{proof}
	Without any loss of generality, we may assume that $p \neq 0, p \neq 1$. First, we show that $r(a_{11}) = p$. Put $p - r(a_{11}) = p_0$. Then $p_0 a_{11} = 0$ so that 
	$$0 \le \begin{bmatrix} p_0 & 0 \\ 0 & 1 - p \end{bmatrix} \begin{bmatrix} a_{11} & a_{12} \\ a_{12}^* & a_{22} \end{bmatrix} \begin{bmatrix} p_0 & 0 \\ 0 & 1 - p \end{bmatrix} = \begin{bmatrix} 0 & p_0 a_{12} \\ a_{12}^* p_0 & a_{22} \end{bmatrix}.$$ 
	Thus $p_0 a_{12} = 0$ and consequently, 
	$$\begin{bmatrix} p - p_0 & 0 \\ 0 & 1 - p \end{bmatrix} a \begin{bmatrix} p - p_0 & 0 \\ 0 & 1 - p \end{bmatrix} = a.$$ 
	Now, it follows that $1 \le r(a) \le 1 - p_0$ so that $p_0 = 0$. Thus $r(a_{11}) = p$. Similarly, $r(a_{22}) = 1 - p$. Since $n(x) = 1 - r(x)$ for any $x \in [0, 1]_M$, we have $n(a_{11}) = 0$ and $n(a_{22}) = 0$.
	
	Next, as $a$ is strict, so is $1 - a = \begin{bmatrix} p - a_{11} & - a_{12} \\ - a_{12}^* & 1 - p - a_{22} \end{bmatrix}$. Thus, as above, $r(p - a_{11}) = p$ and $r(1 - p - a_{22}) = 1 - p$.   Therefore, $s(a_{11}) = 0$ and $s(a_{22}) = 0$. Hence $a_{11} \in [0, p]$ and $a_{22} \in [0, 1 - p]$ are strict in the von Neumann algebras $pMp$ and $(1 - p)M(1 - p)$ respectively.
\end{proof}
\begin{lemma}\label{2}
	Let $M$ be a von Neumann algebra with the underlying Hilbert space $H$ and let $a \in [0, 1]_M$ be strict. If $a x = 0$ for some $x \in M$, then $x = 0$. In particular, if $a \xi = 0$ for some $\xi \in H$, then $\xi = 0$.
\end{lemma}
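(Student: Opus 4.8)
The plan is to reduce both assertions to the single observation that strictness of $a$ forces $a$ to be injective as an operator on $H$. First I would recall that for any $c \in [0,1]_M$ one has $n(c) = 1 - r(c)$, where $r(c)$ is the range projection of $c$ (this identity is already used in the proof of Lemma \ref{1}); since $a = a^*$, the projection $1 - r(a)$ is precisely the orthogonal projection of $H$ onto $\ker(a) = \overline{\ran(a)}^{\perp}$. Consequently the hypothesis $n(a) = 0$ contained in the definition of strictness is equivalent to $\ker(a) = \{0\}$.

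Granting this, the vector statement is immediate: if $a\xi = 0$ then $\xi \in \ker(a) = \{0\}$, so $\xi = 0$. For the operator statement, suppose $ax = 0$ with $x \in M$. Then for every $\xi \in H$ we have $a(x\xi) = (ax)\xi = 0$, so $x\xi \in \ker(a) = \{0\}$; as $\xi$ ranges over $H$ this forces $x = 0$.

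I do not expect a genuine obstacle here; the only content is the identification of $n(a)$ with the kernel projection of $a$, which is standard von Neumann algebra theory and is essentially recorded in the excerpt. Should one prefer a self-contained route avoiding the range-projection identity, one can argue computationally: from $ax = 0$ one gets $g(a)x = 0$ for every continuous $g$ on $\sigma(a)\cup\{0\}$ with $g(0) = 0$ (approximate $g$ uniformly by polynomials with no constant term, each of which is a multiple of $a$ and hence annihilates $x$ on the right), and letting such $g$ increase pointwise to the indicator of $(0,\Vert a\Vert]$ yields $r(a)x = 0$ in the strong operator topology; since $n(a) = 0$ means $r(a) = 1_M$, we again conclude $x = 0$, and the vector case follows in the same way.
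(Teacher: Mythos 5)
Your proof is correct, but it takes a genuinely different route from the paper's. The paper works inside $M$: from $ax=0$ it deduces $a\vert x^{*}\vert^{2}=axx^{*}=0$, hence $a\vert x^{*}\vert=0$ and $a\,r(\vert x^{*}\vert)=0$, so that $r(\vert x^{*}\vert)\le n(a)=0$ and $x=0$; the vector statement is then obtained by applying this to the projection onto the span of $\xi$. You instead combine the identity $n(a)=1-r(a)$ (already invoked in Lemma \ref{1}) with the spatial observation that, for self-adjoint $a$, $1-r(a)$ is the orthogonal projection onto $\ker a$, so strictness forces $a$ to be injective on $H$; both assertions then collapse to one line. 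Your argument buys a bit more: it applies verbatim to any bounded operator $x$ on $H$, not only to $x\in M$, and it handles the vector case directly, which is actually cleaner than the paper's reduction, since the rank-one projection onto the span of $\xi$ need not belong to $M$ while your kernel description of $n(a)$ sidesteps that point entirely. The paper's computation, in return, never needs the spatial identification of $1-r(a)$ with the kernel projection and stays with range projections of elements of $M$. Your fallback functional-calculus argument (polynomials without constant term give $g(a)x=0$ for continuous $g$ vanishing at $0$, then an increasing sequence $g_n\uparrow\chi_{(0,\Vert a\Vert]}$ gives $r(a)x=0$ in the strong topology, and $n(a)=0$ forces $r(a)=1_M$) is also sound, and is essentially a hands-on proof of the same kernel/range fact.
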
 
\begin{proof}
	Let $a x = 0$. Then $a \vert x^* \vert^2 = a x x^* = 0$ so that $a \vert x^* \vert = 0$ and consequently, $a r(\vert x^* \vert) = 0$. Thus $r(\vert x^* \vert) \le n(a) = 0$ so that $r(\vert x^* \vert) = 0$. Now, it follows that $x = 0$. 
	
	Next, if $a \xi = 0$, then $a p = 0$ where $p$ is the projection of $H$ on the span of $\xi$. Now, by the first step, $p = 0$ so that $\xi = 0$.
\end{proof}
\begin{lemma}\label{3}
	Let $M$ be a von Neumann algebra with the underlying Hilbert space $H$ and let $a, b \in [0, 1]_M$ be strict. 
	\begin{enumerate}
		\item Then $a^{\frac{1}{2}}$ is also strict in $M$. 
		\item If $a b = b a$, then $a b$ is also strict in $M$.
	\end{enumerate}
\end{lemma}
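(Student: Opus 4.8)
The plan is to check, in each case, the two defining properties of strictness separately, namely that both $s(\,\cdot\,)$ and $n(\,\cdot\,)$ vanish. Throughout I would use only the definitions of $s$ and $n$ together with two elementary observations: first, for a projection $q\in\mathcal{P}(M)$ and an element $c\in[0,1]_M$ one has $q\le c$ if and only if $cq=q$; and second, $n(x)=1-r(x)$, so that a projection $q$ satisfies $q\le n(x)$ precisely when $qx=0$. Part (2) will additionally call on Lemma \ref{2} and on the fact that the product of two commuting positive elements is positive.

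For part (1), note first that $a\in[0,1]_M$ forces $a^{1/2}\in[0,1]_M$. The equality $n(a^{1/2})=0$ is immediate: if $q\in\mathcal{P}(M)$ with $qa^{1/2}=0$, then $qa=(qa^{1/2})a^{1/2}=0$, hence $q\le n(a)=0$ and $q=0$. For $s(a^{1/2})=0$ I would take an arbitrary projection $q\le a^{1/2}$; then $a^{1/2}q=q$, so $aq=a^{1/2}(a^{1/2}q)=a^{1/2}q=q$, which gives $q\le a$ and therefore $q\le s(a)=0$. Hence $a^{1/2}$ is strict. (Alternatively one could factor $1-a=(1-a^{1/2})(1+a^{1/2})$ with $1+a^{1/2}$ invertible to obtain $r(1-a^{1/2})=r(1-a)$, hence $s(a^{1/2})=s(a)$; but the argument above is cleaner and self-contained.)

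For part (2), observe that $ab=ba\ge 0$ with $\|ab\|\le 1$, so $ab\in[0,1]_M$. To see $n(ab)=0$, suppose $q\in\mathcal{P}(M)$ with $q(ab)=0$; since $(ab)^*=ab$, also $(ab)q=0$, i.e.\ $a(bq)=0$, so $bq=0$ by Lemma \ref{2} ($a$ being strict), and then $q=0$ by Lemma \ref{2} again ($b$ being strict). To see $s(ab)=0$, use that $a(1-b)\ge 0$ (commuting positives), hence $ab\le a$; consequently any projection $q\le ab$ satisfies $q\le a$, so $q\le s(a)=0$. Therefore $ab$ is strict.

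I do not expect a genuine obstacle here: the proof is essentially bookkeeping with the definitions of $s$ and $n$. The only place that needs a moment's care is the vanishing of $s$, where one must move from a projection sitting below $a^{1/2}$ (respectively below $ab$) to a projection sitting below $a$; for $a^{1/2}$ this is handled by the identity $aq=a^{1/2}(a^{1/2}q)$ once $a^{1/2}q=q$ is known, and for $ab$ it follows from the operator inequality $ab\le a$.
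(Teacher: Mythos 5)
Your proposal is correct and follows essentially the same route as the paper: for part (1) you show a projection under $a^{\frac{1}{2}}$ is under $a$ by multiplying/squaring (the paper does this with $p=s(a^{\frac{1}{2}})$ directly) and note $n(a^{\frac{1}{2}})=n(a)=0$; for part (2) you use $ab\le a$ to kill $s(ab)$ and two applications of Lemma \ref{2} to kill $n(ab)$, exactly as in the paper. The only cosmetic difference is that you quantify over arbitrary projections $q$ instead of working with the suprema $s(\cdot)$, $n(\cdot)$ themselves, which changes nothing of substance.
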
 
\begin{proof}
	\begin{enumerate}
		\item Let $p = s(a^{\frac{1}{2}})$. Then $p = p a^{\frac{1}{2}} = a^{\frac{1}{2}} p$. Squaring, we get $p = p a = a p$ so that $p \le s(a) = 0$. Thus $p = 0$ and consequently, $s(a^{\frac{1}{2}}) = 0$. Also, $n(a^{\frac{1}{2}}) = n(a) = 0$. Therefore, $a^{\frac{1}{2}}$ is strict. 
		\item Let $a b = b a$. Then $a b \in [0, 1]_M$ with $a b \le a$. Thus $s(a b) \le a b \le a$ so that $s(a b) \le s(a) = 0$. Therefore, $s(a b) = 0$. Next, we have $a b n(a b) = 0$. As $a$ and $b$ are strict, a repeated use of Lemma \ref{2} yields that $n(a b) = 0$. Thus $a b$ is also strict.
	\end{enumerate}
\end{proof}
\begin{remark}
	Let $a \in [0, 1]_M$. Then $a$ is strict if and only if $a^2$ is strict,
\end{remark}
The following results are a compilation of operator algebra folklore which can easily be found in the literature. (See, for example, \cite[Section I.5.2]{B06}.)
\begin{folklore}\label{4}
	Let $M$ be a von Neumann algebra with the underlying Hilbert space $H$ and let $x \in M$. If $x = u \vert x \vert$ is the polar decomposition of $x$, then 
	\begin{enumerate}
		\item $u: (1 - r(\vert x \vert)) H \to r(\vert x \vert) H$ is an unitary. In particular, $u$ is a partial isometry in $M$. 
		\item $\vert x \vert = u^* x$. 
		\item $r(\vert x\vert) = u^* u$. 
		\item $\vert x^* \vert^k = u \vert x \vert^k u^*$ for all $k \in \mathbb{N}$. 
		\item $\vert x \vert^k = u^* \vert x^* \vert^k u$ for all $k \in \mathbb{N}$. 
	\end{enumerate}
\end{folklore} 
Now we prove the main results of the paper.

\begin{proof}[Proof of Theorem \ref{6}]
	\begin{enumerate}
		\item Put $r(a \circ b) = p_1$ so that $p = 1 - p_1$. Now, by Theorem \ref{characterization in vN}, $a$ and $b$ have matrix representations 
		$$a = \begin{bmatrix} a_{11} & a_{12} \\ a_{12}^* & a_{22} \end{bmatrix} \ \textrm{and} \ b = \begin{bmatrix} b_{11} & - a_{12} \\ - a_{12}^* & b_{22} \end{bmatrix}$$ 
		with respect to $\lbrace p_1, p \rbrace$ such that 
		\begin{enumerate}
			\item $a_{12} a_{12}^* = (p_1 - a_{11}) (p_1 - b_{11})$; 
			\item $a_{12}^* a_{12} = a_{22} b_{22}$; and 
			\item $a_{12} = a_{11} a_{12} + a_{12} a_{22} = b_{11} a_{12} + a_{12} b_{22}$.
		\end{enumerate} 
		Since $a$ and $b$ are strict, we have $p_1 \ne 0, p_1 \ne 1$. Also, by Lemma \ref{1},  $a_{11}$ and $b_{11}$ are strict in $p_1 M p_1$ and $a_{22}$ and $b_{22}$ are strict in $p M p$. Consider the polar decomposition 
		$$a_{12} = u \vert a_{12} \vert$$
		so that $U: u^* u H \to u u^* H$ is a unitary. By (a) and Folklore \ref{4}(4), we have 
		\begin{eqnarray*}
		(p_1 - a_{11}) (p_1 - b_{11}) &=& \vert a_{12}^* \vert^2 = u \vert a_{12} \vert^2 \\
		&=& u (a_{22} b_{22}) u^* \le u u^* \le p_1.
		\end{eqnarray*}
		Now, as $a_{11}$ and $b_{11}$ are strict elements in $[0, p_1]$ with $a_{11} b_{11} = b_{11} a_{11}$, by Lemma \ref{3}(2), we may conclude that $(p_1 - a_{11}) (p_1 - b_{11})$ is also a  strict element in $[0, p_1]$. Thus $r((p_1 - a_{11}) (p_1 - b_{11})) = p_1$. Thus $u u^* = p_1 = 1 - p$. In a similar way, by using (b) and Folklore \ref{4}(5), we may prove that $u^* u = p$. Thus $u: p H \to (1 - p) H$ is a unitary.
		\item By (b), we have $a_{22} b_{22} = \vert a_{12} \vert^2$ so that $a_{22}$, $b_{22}$ and $\vert a_{12} \vert$ commute with each other. Thus by (c), we get  
		\begin{eqnarray*}
		u \vert a_{12} \vert &=& a_{12} = a_{11} a_{12} + a_{12} a_{22} \\
		&=& a_{11} u \vert a_{12} \vert + u \vert a_{12} \vert a_{22} \\
		&=& a_{11} u \vert a_{12} \vert + u a_{22} \vert a_{12} \vert.
		\end{eqnarray*}
		In other words, 
		$$\vert a_{12} \vert (a_{11} u + u a_{22} - u)^* (a_{11} u + u a_{22} - u) = 0.$$ 
		By Lemma \ref{3}, $\vert a_{12} \vert$ is strict in $p M p$ so that, by Lemma \ref{2}, $a_{11} u + u a_{22} = u$. Therefore,
		$$p_1 = u u^* = a_{11} u u^* + u a_{22} u^* = a_{11} + u a_{22} u^*$$
		whence $u a_{22} u^* = p_1 - a_{11}$. In the same way we can show that $u b_{22} u^* = p_1 - b_{11}$. Now, put $a_{22} = a_1$, $b_{22} = b_1$ and $U = \begin{bmatrix} 0 & p \\ u^* & 0 \end{bmatrix}$. Then $a_1$ and $b_1$ are strict elements in $[0, p]$ and $U: H \to K \oplus K$ is a unitary. Also, the matrix multiplications yield that  
		$$U^* \begin{bmatrix} a_1 & (a_1 b_1)^{\frac{1}{2}} \\ (a_1 b_1)^{\frac{1}{2}} & p - a_1 \end{bmatrix} U = a \ \textrm{and} \ U^* \begin{bmatrix} b_1 & - (a_1 b_1)^{\frac{1}{2}} \\ - (a_1 b_1)^{\frac{1}{2}} & p - b_1 \end{bmatrix} U = b.$$ 
		Finally, we show that $p - a_1 - b_1$ is a strict element of $[0, p]$. Note that $a \circ b = \begin{bmatrix} a_{11} + b_{11} - p_1 & 0 \\ 0 & 0 \end{bmatrix}$. Thus $r(a_{11} + b_{11} - p_1) = r(a \circ b) = p_1$. Next, 
		\begin{eqnarray*}
		a_{11} + b_{11} - p_1 &=& p_1 - u a_1 u^* + p_1 - u b_1 u^* - p_1 \\
		&=& u (p - a_1 - b_1) u^*
		\end{eqnarray*}
		for $u p u^* = u u^* u u^* = p_1^2 = p_1$. Also, then 
		$$u^* (a_{11} + b_{11} - p_1)^k u = (p - a_1 - b_1)^k$$
		for all $k \in \mathbb{N}$. Since for any $x \in M^+$, $r(x)$ is limit of the increasing sequence $\lbrace (\frac{1}{n} + x)^{-1} x \rbrace$ in the strong operator topology, we may conclude that 
		$$r(p - a_1 - b_1) = u^* r(a_{1} + b_{11} - p_1) u = u^* p_1 u = p.$$ 
		Thus $n(p - a_1 - b_1) = 0$. Also 
		$$s(p - a_1 - b_1) \le p - a_1 - b_1 \le p - a_1$$
		so that 
		$$s(p - a_1 - b_1) \le s(p - a_1) = 0$$ 
		for $p - a_1$ is strict. Therefore, $p - a_1 - b_1$ is a strict element in $[0, p]$.
	\end{enumerate}	
\end{proof}

\begin{proof}[Proof of Theorem \ref{7}]
	Let $\xi, \eta \in H$. Then 
	\begin{eqnarray*}
	\vert \langle a b \eta, \xi \rangle \vert^2 &=& \vert \langle b \eta, a \xi \rangle \vert^2 \le \langle b^2 \eta, \eta \rangle \langle a^2 \xi, \xi \rangle \\ 
	&\le& \langle (1 - a^2) \eta, \eta \rangle \langle a^2 \xi, \xi \rangle.
	\end{eqnarray*}
	Thus we obtain that 
	\begin{eqnarray*}
	\left\langle \begin{bmatrix} a^2 & a b \\ a b & 1 - a^2 \end{bmatrix} \begin{bmatrix} \xi \\ \eta \end{bmatrix} \right\rangle &=& \langle a^2 \xi, \xi \rangle + \langle a b \eta, \xi \rangle + \langle a b \xi, \eta \rangle + \langle (1 - a^2) \eta, \eta \rangle \\
	&\ge& \left( \langle a^2 \xi, \xi \rangle^{\frac{1}{2}} - \langle (1 - a^2) \eta, \eta \rangle^{\frac{1}{2}} \right)^2 \ge 0.
	\end{eqnarray*}
	Therefore, $a_1 \ge 0$. Similarly, we can also show that $b_1 \ge 0$. Again, 
	$$1_{M_2(M)} - a_1 = \begin{bmatrix} 1 - a^2 & - a b \\ - a b & a^2 \end{bmatrix} \ \textrm{and} \ 1_{M_2(M)} - b_1 = \begin{bmatrix} 1 - b^2 & a b \\ a b & b^2 \end{bmatrix}$$ 
	so that $a_1, b_1 \in [0, 1]_{M_2(M)}$. 
	
	Now, we show that $a_1$ is strict. Let $p$ be a projection in $M_2(M)$ such that $p \le a_1$. Then $p = a_1 p$. For $\begin{bmatrix} \xi \\ \eta \end{bmatrix} \in p (H \oplus H)$, we have $a_1 \begin{bmatrix} \xi \\ \eta \end{bmatrix} = \begin{bmatrix} \xi \\ \eta \end{bmatrix}$ so that 
	$$\xi = a^2 \xi + a b \eta \quad \textrm{and} \quad \eta = a b \xi + (1 - a^2) \eta.$$
	By the second condition, we get $a^2 \eta = a b \xi$ so that $a (a\eta -b \xi) = 0$. Since $a$ is strict, by Lemma \ref{2}, we may conclude that $a \eta = b \xi$. Using this in the first relation, we get $\xi = a^2 \xi + b^2 \xi$ so that $(1 - a^2 - b^2) \xi = 0$. Since $1 - a^2 - b^2$ is also strict, invoking Lemma \ref{2} again we conclude that $\xi = 0$. But then, $a \eta = 0$ whence $\eta = 0$ as $a$ is also strict. Now, it follows that $p = 0$ so that $s(a_1) = 0$.
	
	Next, let $q$ be a projection in $M_2(M)$ such that $a_1 q = 0$. If $\begin{bmatrix} \xi \\ \eta \end{bmatrix} \in q (H \oplus H)$, we have $a_1 \begin{bmatrix} \xi \\ \eta \end{bmatrix} = 0$. Thus 
	$$a^2 \xi + a b \eta = 0 \quad \textrm{and} \quad a b \xi + (1 - a^2) \eta = 0$$
	and, as above we again conclude that $q = 0$. Therefore, $n(a_1) = 0$ too whence $a_1$ is strict. In a similar manner, we can conclude that $b_1$ is also strict. Finally, we show that $a_1$ is absolutely compatible with $b_1$. We have 
	$$(a_1 - b_1)^2 = \begin{bmatrix} a^2 - b^2 & 2 a b \\ 2 a b & b^2 - a^2 \end{bmatrix}^2 = \begin{bmatrix} (a^2 + b^2)^2 & 0 \\ 0 & (a^2 + b^2)^2 \end{bmatrix}$$
	so that $\vert a_1 - b_1 \vert = \begin{bmatrix} a^2 + b^2 & 0 \\ 0 & a^2 + b^2 \end{bmatrix}$. Also 
	\begin{eqnarray*}
	\vert 1_{M_2(M)} - a_1 - b_1 \vert &=& \left\vert \begin{bmatrix} a - a^2 - b^2 & 0 \\ 0 & a^2 + b^2 - 1 \end{bmatrix} \right\vert \\
	&=& \begin{bmatrix} 1 - a^2 - b^2 & 0 \\ 0 & 1 - a^2 - b^2 \end{bmatrix}.
	\end{eqnarray*}
	It follows that $\vert a_1 - b_1 \vert + \vert 1_{M_2(M)} - a_1 - b_1 \vert = 1_{M_2(M)}$ so that $a_1$ is absolutely compatible with $b_1$.
\end{proof}

\thanks{{\bf Acknowledgements}: The author is thankful to Antonio M. Peralta for introducing to him the notion of `strict' elements.}


\begin{thebibliography}{100}

\bibitem{B06} B. Blackadar, \textit{Operator algebras}, Springer-Verlag Heidelberg Berling, 2006. 
%
%
%
%
%
%
%
%
\bibitem{H69} P. R. Halmos, \textit{Two subspaces}, Trans. Amer. math. Soc., 144(1969), 181-189.

\bibitem{JKP} N. K. Jana, A. K. Karn and A. M. Peralta, Absolutely compatible pairs in a von Neumann algebra, (https://arxiv.org/abs/1801.01216), (Communicated for publication).

\bibitem{JKP1} N. K. Jana, A. K. Karn and A. M. Peralta, Contractive linear preservers of absolutely compatible pairs between C$^*$-algebras, Revista de la Real Academia de Ciencias Exactas, Físicas y Naturales. Serie A. Matemáticas (RCSM)  113(2019) no. 3, 2731-2741. 

%
%
%
\bibitem{K14} A. K. Karn, \textit{Orthogonality in $l_p$-spaces and its bearing on ordered Banach spaces}, Positivity, 18(02) (2014), 223-234. 
%
%
\bibitem{K16} A. K. Karn, \textit{Orthogonality in C$^*$-algebras}, Positivity, 20(03) (2016), 607-620. 

\bibitem{K18} A. K. Karn, \textit{Algebraic orthogonality and commuting projections in operator algebras}, Acta Sci. Math. (Szeged), \textbf{84}(2018), 323-353.

%
%
\end{thebibliography}
\end{document}